\theoremstyle{plain}
\newtheorem{thm}{Theorem}[section]
\newtheorem{lem}[thm]{Lemma}
\newtheorem{alg}[thm]{Algorithm}
\theoremstyle{definition}
\newtheorem{exa}[thm]{Example}
\def\l{\left}
\def\r{\right}
\def\as{\!\mathrel{\mathop:}=}
\def\geo{\col[0,1]\to\hs}
\def\ran{\col\Omega\to S}
\def\nat{\mathbb{N}}
\def\nato{{\mathbb{N}_0}}
\def\rls{\mathbb{R}}
\def\exrls{(-\infty,\infty]}
\def\eps{\varepsilon}
\def\lam{\lambda}
\def\sph{\mathbb{S}}
\def\half{\frac{1}{2}}
\renewcommand{\phi}{\varphi}
\def\col{\colon}
\def\argmin{\operatornamewithlimits{\arg\min}}
\def\di{\operatorname{d}}
\def\expe{\mathbb{E}}                         
\def\prob{\mathbb{P}}
\def\mi{\operatorname{Min}}
\def\cf{\mathcal{F}}
\def\ts{\mathcal{T}}
\def\hs{\mathcal{H}}   
\begin{document}
\title[Stochastic minimization]{A variational approach to stochastic minimization of convex functionals}
\author[M. Ba\v{c}\'ak]{Miroslav Ba\v{c}\'ak}
\date{\today}
\subjclass[2010]{90C25, 49J53}
\keywords{Convex optimization, iterative method, proximal point algorithm, stochastic approximation, variational analysis.}
\thanks{}

\address{Max Planck Institute for Mathematics in the Sciences, Inselstr. 22, 04 103 Leipzig, Germany}
\email{bacak@mis.mpg.de}

\begin{abstract}
Stochastic methods for minimizing a convex integral functional, as initiated by Robbins and Monro in the early 1950s, rely on the evaluation of a gradient (or subgradient if the function is not smooth) and moving in the corresponding direction. In contrast, we use a variational technique resulting in an implicit stochastic minimization method, which has recently appeared in several diverse contexts. Such an approach is desirable whenever the underlying space does not have a differentiable structure and moreover it exhibits better stability properties which makes it preferable even in linear spaces. Our results are formulated in locally compact Hadamard spaces, but they are new even in Euclidean space, the main novelty being more general growth conditions on the functional. We verify that the assumptions of our convergence theorem are satisfied in a few classical minimization problems.
\end{abstract}

\maketitle
\section{Introduction}

In the present paper we study convex integral functionals on a locally compact Hadamard space $(\hs,d).$ The reader who is not interested in this level of generality may consider the underlying space to be Euclidean. Let $(S,\mu)$ be a probability space and assume that a function $f\col\hs\times S\to\exrls$ satisfies 
\begin{enumerate}
 \item $f(\cdot,\xi)$ is a convex lsc function for each $\xi\in S,$
 \item $f(x,\cdot)$ is a measurable function for each $x\in\hs.$
\end{enumerate}
Then define
\begin{equation} \label{eq:form}
 F(x)\as\int_S f\l(x,\xi\r)\di\mu(\xi),\qquad x\in\hs.
\end{equation}
We will assume that $F(x)>-\infty$ for every $x\in\hs$ and that $F$ is lsc (which can be assured, for instance, by Fatou's lemma). 

The aim of stochastic minimization is to minimize the function $F$ by using the marginal functions $f(\cdot,\xi).$ Stochastic (sub)gradient methods originated in the seminal work of H.~Robbins and S.~Monro~\cite{robbins-monro} and keep attracting significant attention \cite{kushner-yin}. In those algorithms, each approximation step consists of choosing randomly $\xi\in S$ and moving in the direction of a (sub)gradient of the function $f(\cdot,\xi).$ In contrast, the stochastic proximal point algorithm (PPA) appeared only very recently in \cite{wang-bertsekas} and the aim of our paper is to further extend this approach. Recall that the basic proximal point algorithm is due to Martinet~\cite{martinet}, Rockafellar~\cite{rocka} and Br\'ezis\&Lions \cite{brezis}.

The stochastic PPA is based on resolvent mappings associated with the marginal functions
\begin{equation*} 
J_\lambda^\xi x \as\argmin_{y\in \hs}\l[f(y,\xi)+\frac{1}{2\lambda}d(x,y)^2\r],\qquad x\in\hs,
\end{equation*}
where $\lam>0$ is a given parameter. Note that in Hilbert spaces one has $J_\lambda^\xi =\l(I+\lam\partial f(\cdot,\xi)\r)^{-1}.$ In Hadamard spaces, the resolvent of a convex function was first studied by J.~Jost \cite{jost95} and U. Mayer~\cite{mayer}. The \emph{stochastic proximal point algorithm} is defined as follows.
\begin{alg}
 Let $\l(\lam_i\r)$ be a sequence of positive reals and let $\l(\xi_i\r)$ be a sequence of independent random variables $\xi_i\ran$ with distribution $\mu.$ Choose an arbitrary starting point $x_0\in \hs$ and set
\begin{equation}\label{eq:sppa}
 x_i\as J_{\lam_i}^{\xi_i} x_{i-1},
\end{equation}
for each $i\in\nat.$ Note that $x_i\ran$ is a random variable.
\end{alg}
We prove the convergence of the sequence $\l(x_i\r)$ to a minimizer of $F$ under weaker assumptions than in the existing literature and moreover we demonstrate the applicability of the stochastic PPA into several classical optimization problems in Euclidean space as well as a recent statistical model for phylogenetic inference \cite{benner}, where the underlying space is Hadamard. In all these examples, the resolvents of marginal functions are easy to compute in a closed form, which makes the algorithm readily usable in practice. The PPA as an implicit minimization method has better stability and convergence properties than gradient descent methods (see \cite{bertsekas} for detailed and authoritative arguments), which makes our algorithm attractive even in Euclidean space. Moreover, in Hadamard spaces without a differentiable structure, gradient descent methods cannot be used and the stochastic PPA is therefore the only possible option. Our choice of a Hadamard space setting in the present paper is, inter alia, justified by the minimization problem in Example \ref{exa:bhv} below. The relevance of Hadamard spaces for analysis and optimization is described in \cite{mybook}. In particular, minimization algorithms are discrete time analogs of gradient flows, which  in Hadamard spaces originated independently in \cite{jost-ch} and \cite{mayer} and were later generalized into much more general singular metric spaces \cite{ags}.

In the remainder of the Introduction, we describe the relation of our work to the existing literature. D.~Bertsekas \cite{bertsekas} was the first to introduce the proximal point algorithm for convex optimization problems with objective functions of the form
\begin{equation} \label{eq:sumfun} 
f\as\sum_{n=1}^N f_n,
\end{equation}
where $f_n\col\rls^d\to\rls$ are all convex and continuous. This is a special case of \eqref{eq:form}. Bertsekas' work \cite{bertsekas} has been very influential: the algorithm was generalized into locally compact Hadamard space \cite{mm} and subsequently applied in computational phylogenetics \cite{benner}. S.~Ohta and M.~P\'alfia \cite{ohta-palfia} then continued along these lines and proved convergence in some positively curved metric spaces (also considering marginal functions over general probability space). The algorithm has also been extended into the $1$-dimensional sphere~$\sph^1$ and applied in image restoration \cite{bergmann-etal,bw1,bw2}; with further developments following in~\cite{imaging}. For a survey, see G.~Steidl's paper \cite{steidl}. Another interesting result on the PPA in Hadamard spaces is due to S.~Banert \cite{banert}.

M. Wang and D. Bertsekas \cite{wang-bertsekas} also minimize a convex function of the form \eqref{eq:form} by the stochastic proximal point algorithm, but their goals and means are somewhat different from ours. Namely, we work in locally compact Hadamard spaces  whereas \cite{wang-bertsekas} requires $\rls^d,$ and moreover, our growth conditions on the marginal functions $f(\cdot,\xi)$ are weaker and allow applications like \eqref{eq:leastsquares}, whereas \cite{wang-bertsekas} requires the existence of a \emph{common} Lipschitz constant $L$ in \cite[Assumption 1]{wang-bertsekas} and a Lipschitz-like condition with constant $L$ on all subgradients, which excludes important applications such as Example \ref{exa:e3}. This makes our results new even in Euclidean spaces. 

An implicit stochastic minimization has also recently emerged in statistics \cite{toulis2,toulis1}. 

In conclusion, the present paper can be viewed as an continuation of and improvement upon \cite{mm} for allowing general probability distributions as well as weaker growth conditions on the marginal functions. Function~\eqref{eq:form} and~\eqref{eq:sumfun} are sometimes called \emph{expected risk} and \emph{empirical risk,} respectively. Naturally, we want to minimize expected risk rather than empirical risk provided the former is available (for instance, if we can sample arbitrary amount of data from $\mu$). Hence the present paper enables an expected risk minimization, whereas the results of~\cite{mm} applied to empirical risk only.

Our convergence theorem as well as its proof is ispired by Lyapunov's stability theory of dynamical systems. Specifically, the distance function squared (which is strongly convex in Hadamard spaces) plays the role of a Lyapunov function and we need to arrive at estimate~\eqref{eq:superm} in order to apply the Robbins-Siegmund lemma. Futhermore, our Lipschitz-like growth condition~\eqref{i:sppa:lips} from Theorem \ref{thm:conv} is also familiar from the theory of dynamical systems. 

\section{Preliminaries}

For a background in Hadamard space theory, the reader is referred to \cite{mybook,bh,jost2}. Here we recall that a complete metric space $(\hs,d)$ is called a \emph{Hadamard space} if for each pair of points $x,y\in \hs$ there exists a mapping $\gamma\geo,$ called a geodesic, such that $\gamma(0)=x,\gamma(1)=y,$ and
\begin{equation*}d\l(\gamma(s),\gamma(t)\r)=d(x,y)\:|s-t|,\end{equation*}
for each $s,t\in[0,1],$ and if for each point $z\in\hs,$ geodesic $\gamma\geo,$ and $t\in[0,1],$ we have
\begin{equation} \label{eq:cat}
d\l(z,\gamma(t)\r)^2\leq (1-t) d\l(z,\gamma(0)\r)^2+td\l(z,\gamma(1)\r)^2-t(1-t) d\l(\gamma(0),\gamma(1)\r)^2.
\end{equation}
We say that a function $f\col\hs\to\exrls$ is \emph{convex} if $f\circ\gamma\col[0,1]\to\exrls$ is a convex function. 

We shall need the following well known lemma.
\begin{lem} \label{lem:estim} If $h\col\hs\to\exrls$ is a convex lsc function and $J_\lam^h$ stands for its resolvent, then
\begin{equation*} h\l(J_\lam^h (x)\r)-h(y)\leq \frac1{2\lam} d(x,y)^2 -\frac1{2\lam} d\l(J_\lam^h (x),y\r)^2, \end{equation*}
for every $x,y\in \hs.$ 
\end{lem}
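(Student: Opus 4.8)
The plan is to exploit the defining minimality of the resolvent together with the two convexity ingredients available in a Hadamard space: the convexity of $h$ along geodesics and the strong (CAT(0)) convexity of the squared distance, i.e.\ inequality~\eqref{eq:cat}. Write $z\as J_\lam^h(x)$ for the minimizer of $y\mapsto h(y)+\frac1{2\lam}d(x,y)^2$. Since this objective is proper and its minimum is attained, $h(z)<\infty$; moreover, if $h(y)=\infty$ the asserted inequality is trivial, so I may assume $h(y)<\infty$ as well. Fixing the geodesic $\gamma\geo$ with $\gamma(0)=z$ and $\gamma(1)=y$, minimality of $z$ gives, for every $t\in[0,1]$,
\[ h(z)+\frac1{2\lam}d(x,z)^2 \le h\l(\gamma(t)\r)+\frac1{2\lam}d\l(x,\gamma(t)\r)^2. \]

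First I would bound the right-hand side from above. Convexity of $h$ yields $h\l(\gamma(t)\r)\le (1-t)h(z)+t\,h(y)$, while the CAT(0) inequality~\eqref{eq:cat}, applied with base point $x$, gives
\[ d\l(x,\gamma(t)\r)^2\le (1-t)\,d(x,z)^2+t\,d(x,y)^2-t(1-t)\,d(z,y)^2. \]
Substituting both estimates into the minimality inequality and cancelling the common terms $(1-t)h(z)$ and $\frac{1-t}{2\lam}d(x,z)^2$ from the two sides, one is left with an inequality all of whose remaining terms carry a factor $t$. Dividing by $t>0$ produces, for each $t\in(0,1]$,
\[ h(z)+\frac1{2\lam}d(x,z)^2\le h(y)+\frac1{2\lam}d(x,y)^2-\frac{1-t}{2\lam}d(z,y)^2. \]

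Finally, letting $t\to 0^+$ upgrades the coefficient of $d(z,y)^2$ to the full $\frac1{2\lam}$, and then discarding the nonnegative term $\frac1{2\lam}d(x,z)^2$ on the left yields exactly the claim $h\l(J_\lam^h(x)\r)-h(y)\le \frac1{2\lam}d(x,y)^2-\frac1{2\lam}d\l(J_\lam^h(x),y\r)^2$. I do not expect any serious obstacle here; the one point that requires care is recognizing that the decisive coefficient $-\frac1{2\lam}$ in front of $d(z,y)^2$ originates \emph{entirely} from the strong-convexity term $-t(1-t)d(z,y)^2$ in~\eqref{eq:cat}. Testing the minimality inequality directly at $y$ (the case $t=1$) would discard this term and lose the conclusion, so it is essential to vary along an interior geodesic point and pass to the limit $t\to 0$.
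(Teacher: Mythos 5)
Your proof is correct, and it is genuinely more than what the paper provides: the paper does not prove this lemma at all, its ``proof'' being only a citation to \cite[Lemma 2.2.23]{mybook}. Your derivation is, in substance, the standard argument behind that cited result, and every step checks out: $h(z)$ is finite because $z$ minimizes a proper objective, the case $h(y)=\infty$ is trivial, the cancellation after inserting convexity of $h$ and inequality \eqref{eq:cat} is legitimate precisely because $h(z)$ is finite (no $\infty-\infty$ issue), and letting $t\to 0^+$ recovers the full coefficient $\frac{1}{2\lam}$ in front of $d(z,y)^2$. In fact, before discarding the term $\frac1{2\lam}d(x,z)^2$, your computation yields the slightly stronger estimate
\begin{equation*}
h\l(J_\lam^h(x)\r)+\frac1{2\lam}d\l(x,J_\lam^h(x)\r)^2+\frac1{2\lam}d\l(J_\lam^h(x),y\r)^2\leq h(y)+\frac1{2\lam}d(x,y)^2,
\end{equation*}
which is the quadratic-growth inequality at the minimizer of a strongly convex function; this is exactly how the result is usually stated and proved in the literature. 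Your closing observation is also well taken: the decisive coefficient comes entirely from the term $-t(1-t)d(z,y)^2$ in \eqref{eq:cat}, so testing minimality at $t=1$ would lose the conclusion, and varying along interior points of the geodesic with $t\to 0^+$ is essential. The only hypothesis you use implicitly is that $h$ is proper, but that is forced by the assumption that the resolvent $J_\lam^h$ exists, so nothing is missing.
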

\begin{proof}
 See \cite[Lemma 2.2.23]{mybook}.
\end{proof}

Like in the above listed literature \cite{mm,bertsekas,wang-bertsekas}, our proof of Theorem \ref{thm:conv} uses the famous Robbins-Siegmund theorem~\cite{robbins-siegmund}.
\begin{thm} \label{thm:rs}
Let $\l(\Omega,\cf,\l(\cf_k\r)_{k\in\nato},\prob\r)$ be a filtered probability space. Assume $\l(U_k\r),\l(Y_k\r),\l(Z_k\r)$ and $\l(W_k\r)$ are sequences of nonnegative real-valued random variables defined on $\Omega$ and assume that
\begin{enumerate}
 \item $U_k,Y_k,Z_k,W_k$ are~$\cf_k$-measurable for each $k\in\nato,$
 \item $ \expe \l(Y_{k+1}\big|\cf_k\r) \leq \l(1+U_k\r)Y_k-Z_k+W_k,$ for each $k\in\nato,$ \label{i:mar:ii}
 \item $\sum_k U_k<\infty$ and $\sum_k W_k<\infty.$
\end{enumerate}
Then the sequence $\l(Y_k\r)$ converges to a finite random variable $Y$ almost surely, and $\sum_k Z_k<\infty$ almost surely.
\end{thm}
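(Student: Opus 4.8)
The plan is to reduce the statement to Doob's classical supermartingale convergence theorem by transforming the ``almost supermartingale'' recursion in hypothesis~\eqref{i:mar:ii} into a genuine nonnegative supermartingale. The first step is to absorb the multiplicative factors $1+U_k.$ Since $\sum_kU_k<\infty$ almost surely and $U_k\geq0,$ the infinite product $\beta\as\prod_{j=0}^{\infty}\l(1+U_j\r)$ converges to a finite, strictly positive random variable almost surely. Setting $\alpha_k\as\prod_{j=0}^{k-1}\l(1+U_j\r)^{-1}$ with $\alpha_0\as1,$ I observe that each $\alpha_{k+1}$ is $\cf_k$-measurable (because $U_j$ is $\cf_j$-measurable for $j\leq k$), that $\alpha_k$ decreases from $1$ to $\beta^{-1},$ so $\beta^{-1}\leq\alpha_k\leq1$ for all $k,$ and that $\alpha_{k+1}\l(1+U_k\r)=\alpha_k.$

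Next I would define the rescaled process
\begin{equation*}
 V_k\as\alpha_kY_k+\sum_{j=0}^{k-1}\alpha_{j+1}Z_j\geq0.
\end{equation*}
A direct computation using $\alpha_{k+1}\l(1+U_k\r)=\alpha_k,$ hypothesis~\eqref{i:mar:ii}, and the $\cf_k$-measurability of $\alpha_{k+1},Z_k,W_k$ yields
\begin{equation*}
 \expe\l(V_{k+1}\big|\cf_k\r)\leq V_k+\alpha_{k+1}W_k,
\end{equation*}
so $V_k$ is a nonnegative almost supermartingale whose error term $\alpha_{k+1}W_k$ is $\cf_k$-measurable and, since $\alpha_{k+1}\leq1$ and $\sum_kW_k<\infty,$ almost surely summable. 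Subtracting the predictable nondecreasing process $A_k\as\sum_{j=0}^{k-1}\alpha_{j+1}W_j$ then produces a genuine supermartingale $V_k'\as V_k-A_k,$ bounded below by $-A_\infty$ with $A_\infty\leq\sum_jW_j<\infty$ almost surely.

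The main obstacle is that $A_\infty$ is only almost surely finite, not integrable, so Doob's theorem does not apply to $V_k'$ directly; I would circumvent this by localization. For $m\in\nat$ introduce $\sigma_m\as\inf\l\{k:A_{k+1}>m\r\},$ which is a stopping time because $A_{k+1}$ is $\cf_k$-measurable. The stopped process $V_{k\wedge\sigma_m}'$ is a supermartingale bounded below by the constant $-m$ (and integrable, granted the standard requirement $\expe Y_0<\infty,$ the stopping keeping the $W$-contributions controlled), hence it converges almost surely by Doob's theorem; moreover on $\l\{\sigma_m=\infty\r\}=\l\{A_\infty\leq m\r\}$ it coincides with $V_k'.$ Letting $m\to\infty$ and using $A_\infty<\infty$ almost surely, I conclude that $V_k',$ and therefore $V_k=V_k'+A_k,$ converges almost surely to a finite limit.

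Finally I would read off the two conclusions. Because $V_k$ converges and the partial sums $\sum_{j<k}\alpha_{j+1}Z_j$ are nondecreasing and dominated by $V_k,$ they converge, that is $\sum_j\alpha_{j+1}Z_j<\infty$ almost surely; as $\alpha_{j+1}\geq\beta^{-1}>0,$ this forces $\sum_jZ_j<\infty$ almost surely. Consequently $\alpha_kY_k=V_k-\sum_{j<k}\alpha_{j+1}Z_j$ converges almost surely, and dividing by the almost surely positive limit $\beta^{-1}$ of $\alpha_k$ shows that $\l(Y_k\r)$ converges almost surely to a finite random variable $Y,$ as required.
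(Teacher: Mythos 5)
Your proposal cannot be compared against a proof in the paper, because the paper does not prove this statement at all: its ``proof'' is the citation to Theorem~1 of Robbins and Siegmund. What you have reconstructed is, in outline, exactly the classical Robbins--Siegmund argument: discount by the products $\alpha_k=\prod_{j<k}(1+U_j)^{-1}$ to absorb the multiplicative term, compensate by the predictable sums $A_k=\sum_{j<k}\alpha_{j+1}W_j$ to absorb the additive term, localize with stopping times so that Doob's supermartingale convergence theorem applies, and then unwind. The algebra is correct throughout: the identity $\alpha_{k+1}(1+U_k)=\alpha_k$, the inequality $\expe(V_{k+1}\mid\cf_k)\le V_k+\alpha_{k+1}W_k$, the fact that $\sigma_m$ is a stopping time because $A_{k+1}$ is $\cf_k$-measurable, the lower bound $-m$ for the stopped process, and the extraction of both conclusions from the convergence of $V_k$ (monotone partial sums of $\alpha_{j+1}Z_j$ dominated by $V_k$, hence summable; then $\alpha_kY_k$ converges and $\alpha_k\to\beta^{-1}>0$).

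The one genuine gap is the integrability you quietly assume. The theorem as stated imposes no moment conditions: the $Y_k$ are merely nonnegative real-valued random variables, hypothesis (ii) is an inequality between generalized conditional expectations, and neither $\expe Y_0<\infty$ nor integrability of $U_k,W_k,Z_k$ is available (almost sure summability in (iii) gives nothing in expectation). Your parenthetical ``granted the standard requirement $\expe Y_0<\infty$'' therefore adds a hypothesis, so as written you prove a strictly weaker statement. The repair is short and standard: fix $c>0$ and restrict to the event $E_c\as\l\{Y_0\le c\r\}\in\cf_0.$ Since $1_{E_c}$ is $\cf_k$-measurable for every $k,$ multiplying the whole scheme by $1_{E_c}$ preserves all the inequalities, and the stopped, shifted process $1_{E_c}\l(V'_{k\wedge\sigma_m}+m\r)$ is a nonnegative generalized supermartingale with initial value $1_{E_c}(Y_0+m)\le c+m;$ by the tower property for nonnegative random variables its expectations stay bounded by $c+m,$ so it is an honest nonnegative supermartingale and Doob's theorem yields almost sure convergence on $E_c\cap\l\{A_\infty\le m\r\}.$ Letting $c\to\infty$ and $m\to\infty$ exhausts a set of full measure, because $Y_0<\infty$ almost surely and $A_\infty\le\sum_j W_j<\infty$ almost surely, and the rest of your argument then goes through verbatim. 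With this emendation your proof is complete and is essentially the original proof of Robbins and Siegmund.
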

\begin{proof}
 \cite[Theorem~1]{robbins-siegmund}.
\end{proof}

\section{Main convergence theorem}

In this Section we prove that if the marginal functions $f(\cdot,\xi)$ satisfy a rather mild growth condition, the stochastic PPA converges to a minimizer of the convex integral functional in question. Notice that the growth condition in~\eqref{i:sppa:lips} is somewhat delicate, because there is no absolute value on the left hand side. It is also weaker than \cite[Assumption 1]{wang-bertsekas}, which requires the existence of a constant $L>0$ such that for almost every $\xi\in S,$ the subgradients of $f(\cdot,\xi)$ satisfy a Lipschitz-like condition with Lipschitz constant~$L.$

\begin{thm}[Convergence of the stochastic PPA] \label{thm:conv}
Assume that
\begin{enumerate}
 \item a function $F$ is of the form \eqref{eq:form} and has a minimizer,
 \item there exists $p\in\hs$ and an $L^2$-function $L\col S\to(0,\infty)$ such that \label{i:sppa:lips}
\begin{equation*} 
 f\l(x,\xi\r)-f\l(y,\xi\r)\leq L(\xi)\l[1+d(x,p) \r]d(x,y),
\end{equation*}
whenever $x,y\in\hs,$
\item $\sum_{i=1}^\infty \lam_i =\infty$ and $\sum_{i=1}^\infty \lam_i^2 <\infty.$
\end{enumerate}
Then there exists a random variable $x\col\Omega\to\mi F$ such that for almost every $\omega\in\Omega$ the sequence $\l(x_i(\omega)\r)$ given by \eqref{eq:sppa} converges $x(\omega).$
\end{thm}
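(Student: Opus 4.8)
The plan is to run the Robbins--Siegmund theorem with the squared distance to a fixed minimizer as the Lyapunov function. Fix $z\in\mi F$, set $Y_i\as d(x_i,z)^2$, and let $\cf_i\as\sigma(\xi_1,\dots,\xi_i)$, so that $x_i$ is $\cf_i$-measurable while $\xi_{i+1}$ is independent of $\cf_i$ with law $\mu$. Applying Lemma~\ref{lem:estim} to $h=f(\cdot,\xi_i)$ at the points $x=x_{i-1}$ and $y=z$, and recalling $x_i=J_{\lam_i}^{\xi_i}x_{i-1}$, I obtain
\begin{equation*}
 d(x_i,z)^2\leq d(x_{i-1},z)^2-2\lam_i\l[f(x_i,\xi_i)-f(z,\xi_i)\r].
\end{equation*}

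First I would control the implicit defect $f(x_{i-1},\xi_i)-f(x_i,\xi_i)$. Taking $y=x_{i-1}$ in Lemma~\ref{lem:estim} gives $\tfrac1{2\lam_i}d(x_i,x_{i-1})^2\leq f(x_{i-1},\xi_i)-f(x_i,\xi_i)$, while the one-sided growth condition~\eqref{i:sppa:lips} bounds the right-hand side by $L(\xi_i)\l[1+d(x_{i-1},p)\r]d(x_{i-1},x_i)$; together these give the step estimate $d(x_{i-1},x_i)\leq 2\lam_i L(\xi_i)\l[1+d(x_{i-1},p)\r]$. Feeding this back, I split $f(x_i,\xi_i)-f(z,\xi_i)=\l[f(x_{i-1},\xi_i)-f(z,\xi_i)\r]-\l[f(x_{i-1},\xi_i)-f(x_i,\xi_i)\r]$ and bound the defect once more by~\eqref{i:sppa:lips} together with the step estimate, producing a remainder of order $\lam_i^2 L(\xi_i)^2\l[1+d(x_{i-1},p)\r]^2$. (Here the absence of an absolute value on the left of~\eqref{i:sppa:lips} is exactly what is needed, since the resolvent's optimality supplies the defect with the correct sign.)

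Next I would take the conditional expectation given $\cf_{i-1}$. As $x_{i-1}$ is $\cf_{i-1}$-measurable and $\xi_i$ is independent of $\cf_{i-1}$, the leading term integrates to $F(x_{i-1})-F(z)\geq0$ and $\expe\l[L(\xi_i)^2\mid\cf_{i-1}\r]=\|L\|_{L^2}^2$. To reach the Robbins--Siegmund form I would then use $d(x_{i-1},p)\leq d(x_{i-1},z)+d(z,p)$ and the elementary bound $\l[1+d(x_{i-1},p)\r]^2\leq 2\l[1+d(z,p)\r]^2+2\,d(x_{i-1},z)^2$ to absorb the distance-quadratic part into a multiple of $Y_{i-1}$, arriving at
\begin{equation*}
 \expe\l(Y_i\mid\cf_{i-1}\r)\leq\l(1+U_{i-1}\r)Y_{i-1}-Z_{i-1}+W_{i-1},
\end{equation*}
with $U_{i-1}$ and $W_{i-1}$ of order $\lam_i^2$ (hence summable by assumption (iii)) and $Z_{i-1}\as 2\lam_i\l[F(x_{i-1})-F(z)\r]\geq0$. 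Theorem~\ref{thm:rs} then yields, almost surely, that $d(x_i,z)^2$ converges to a finite limit and that $\sum_i\lam_i\l[F(x_{i-1})-F(z)\r]<\infty$; since $\sum_i\lam_i=\infty$, the latter forces $\liminf_i F(x_i)=\min F$ almost surely.

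The hard part will be upgrading this to almost sure convergence of the \emph{whole} sequence to a random minimizer. From the convergence of $d(x_i,z)^2$ the sequence $(x_i)$ is bounded, so local compactness of $\hs$ produces a subsequence converging strongly to some $x^*$; taking it along the subsequence where $F(x_i)\to\min F$ and invoking lower semicontinuity of $F$ shows $x^*\in\mi F$. To pass from subsequential to full convergence I would fix a countable dense subset $D$ of the separable set $\mi F$ and restrict to the full-measure event on which $d(x_i,z)$ converges simultaneously for every $z\in D$; approximating $x^*$ by points of $D$ and using that $\lim_i d(x_i,z)=d(x^*,z)$ along the convergent subsequence then forces $\lim_i d(x_i,x^*)=0$. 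The two delicate points are the interchange of the almost-sure null sets over the uncountable family $\mi F$, which the separability reduction resolves, and the measurability of the limit $x\col\Omega\to\mi F$, which holds because it is the pointwise limit of the measurable maps $x_i$.
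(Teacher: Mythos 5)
Your proposal is correct and follows essentially the same route as the paper's proof: the resolvent inequality (Lemma~\ref{lem:estim}) combined with the one-sided growth condition~\eqref{i:sppa:lips} to control the implicit defect and the step size, the Robbins--Siegmund theorem applied to $d(x_i,z)^2$ to get convergence of distances and $\sum_i\lam_i\l[F(x_i)-\min F\r]<\infty$, and then the separability of $\mi F$ with a countable dense family plus an approximation argument to upgrade a cluster point in $\mi F$ to almost sure convergence of the whole sequence. The only differences are cosmetic (you bound the defect pointwise before taking conditional expectation, and you fix one minimizer first rather than starting with the dense family), so nothing needs to be changed.
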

\begin{proof}

Denote $\cf_i\as\sigma\l(\xi_0,\dots,\xi_i\r).$ We claim that for each $y\in\hs$ there exits a constant $C_{y,p}>0,$ depending also on $p,$ such that 
\begin{equation} \label{eq:superm} \expe\l[ d\l(x_{i+1},y\r)^2\big| \cf_i\r] \leq \l(1+2C_{y,p}\lam_i^2 \expe \l[L\l(\xi\r)^2\r]\r) d\l(x_i,y\r)^2 -2\lam_i \l[F\l(x_i\r)-F(y)\r]+2C_{y,p}\lam_i^2 \expe \l[L\l(\xi\r)^2\r],
\end{equation}
almost surely and for every $i\in\nat.$ Here of course the sequence $x_i$ depends on $\omega\in\Omega$ and one should write $x_i(\omega)$ instead of $x_i$ to be more precise. We will now prove this claim.

Let us fix $i\in\nat.$ By Lemma \ref{lem:estim} we have
\begin{equation*} d\l(x_{i+1},y\r)^2\leq d\l(x_i,y\r)^2 - 2\lam_i\l[ f\l(x_{i+1},\xi_i\r)-f\l(y,\xi_i\r)\r].\end{equation*}
Taking the conditional expectation with respect to $\cf_i$ gives
\begin{equation*} \expe\l[ d\l(x_{i+1},y\r)^2\big| \cf_i\r] \leq d\l(x_i,y\r)^2 - 2\lam_i\expe\l[ f\l(x_{i+1},\xi_i\r)-f\l(y,\xi_i\r)\big| \cf_i\r].\end{equation*}
If we denote by $x_i^\xi$ the result of the algorithm at the $i$-th step if $\xi_i(\omega)=\xi$ we get
\begin{align*}
\expe\l[ d\l(x_{i+1},y\r)^2\big| \cf_i\r] & \leq d\l(x_i,y\r)^2 - 2\lam_i \expe \l[ f\l(x_{i+1}^\xi,\xi\r)-f\l(y,\xi\r)\r] \\ & = d\l(x_i,y\r)^2 - 2\lam_i \l[ F\l(x_i\r)-F(y)\r] + 2\lam_i \expe \l[ f\l(x_i,\xi\r)-f\l(x_{i+1}^\xi,\xi\r)\r] .
\end{align*}
By the assumptions we have
\begin{align*}
\expe \l[ f\l(x_i,\xi\r)-f\l(x_{i+1}^\xi,\xi\r)\r] &\leq  \l(1+d\l(x_i,p \r)\r)\expe \l[L\l(\xi\r)  d\l( x_i,x_{i+1}^\xi\r)\r] \\ &\leq 2\lam_i \l(1+d\l(x_i,p \r)\r)^2\expe \l[L\l(\xi\r)^2\r] \\ &\leq 4\lam_i \l(1+d\l(x_i,p \r)^2\r)\expe \l[L\l(\xi\r)^2\r], 
\end{align*}
since
\begin{equation*} d\l(x_i,x_{i+1}^\xi\r)\leq 2\lam_i \frac{f\l(x_i,\xi\r)-f\l(x_{i+1}^\xi,\xi\r)}{d\l( x_i,x_{i+1}^\xi\r)}\leq 2\lam_i L\l(\xi\r)\l(1+d\l(x_i,p \r)\r).\end{equation*}
Thus, for each $y\in\hs,$ there exits a constant $C_{y,p}>0$ such that
\begin{equation*}
 \expe \l[ f\l(x_i,\xi\r)-f\l(x_{i+1}^\xi,\xi\r)\r] \leq  C_{y,p}\lam_i \l(1+d\l(x_i,y \r)^2\r)\expe \l[L\l(\xi\r)^2\r], 
\end{equation*}
We hence finally obtain
\begin{equation*}
\expe\l[ d\l(x_{i+1},y\r)^2\big| \cf_i\r] \leq \l(1+2C_{y,p}\lam_i^2 \expe \l[L\l(\xi\r)^2\r]\r) d\l(x_i,y\r)^2 -2\lam_i \l[F\l(x_i\r)-F(y)\r]+2C_{y,p}\lam_i^2 \expe \l[L\l(\xi\r)^2\r],\end{equation*}
which finishes the proof of~\eqref{eq:superm}.

Next, choose a countable dense subset $\l\{v_n\r\}$ of $\mi F.$ This is possible because $\mi F$ is a locally compact Hadamard space, its closed balls are therefore compact by the Hopf-Rinow theorem \cite[p.~35]{bh} and consequently it is separable. Fix $v_n$ for a moment and for each $i\in\nat$ apply~\eqref{eq:superm} with $y=v_n$ to obtain
\begin{align*} \expe\l[ d\l(x_{i+1}(\omega),v_n\r)^2\big| \cf_i\r] & \leq \l(1+2C_{v_n,p}\lam_i^2 \expe \l[L\l(\xi\r)^2\r]\r) d\l(x_i(\omega),v_n\r)^2-2\lam_i \l[F\l(x_i(\omega)\r)-F\l(v_n\r)\r] \\  & \quad +2C_{v_n,p}\lam_i^2 \expe \l[L\l(\xi\r)^2\r],
\end{align*}
for every $\omega$ from a full measure set $\Omega_{v_n}\subset\Omega.$ Theorem \ref{thm:rs} immediately gives that $d\l(v_n,x_i(\omega)\r)$ converges and that
\begin{equation} \label{eq:fromrs} \sum_{i=0}^\infty \lam_i\l[F\l(x_i(\omega)\r)-\inf F\r]<\infty,\end{equation}
for every $\omega\in\Omega_{v_n}.$ Next denote
\begin{equation*}\Omega_\infty\as \bigcap_{n\in\nat} \Omega_{v_n},\end{equation*}
which is by countable subadditivity again a set of full measure. For each $\omega\in\Omega_\infty,$ we have from~\eqref{eq:fromrs} that
\begin{equation} \label{eq:liminf}
\liminf_{i\to\infty} F\l(x_i(\omega)\r)= \inf F.
\end{equation}
Since $\l(x_i(\omega)\r)$ is bounded, it has a cluster point $x(\omega)\in \hs.$ By the lower semicontinuity of~$F$ and by~\eqref{eq:liminf} we obtain that $x(\omega)\in\mi F.$

We will now show that, given $z\in\mi F,$ the sequence $d\l(z,x_i(\omega)\r)$ converges. Indeed, for each $\eps>0$ there exists $v_{n(\eps)}\in\l\{v_n\r\}$ such that $d\l(z,v_{n(\eps)}\r)<\eps.$ Because the sequence $d\l(x_i(\omega),v_{n(\eps)}\r)$ converges, there exists $k\in\nat$ such that for each $i,j\ge k$ we have
\begin{equation*} \l| d\l(x_i(\omega),v_{n(\eps)}\r) - d\l(x_j(\omega),v_{n(\eps)}\r) \r|<\eps .\end{equation*}
Therefore,
\begin{equation*} \l| d\l(x_i(\omega),z\r) - d\l(x_j(\omega),z\r) \r|<\l| d\l(x_i(\omega),v_{n(\eps)}\r) - d\l(x_j(\omega),v_{n(\eps)}\r) \r|+ d\l(z,v_{n(\eps)}\r) + d\l(z,v_{n(\eps)}\r)<3\eps,\end{equation*}
for each $i,j\ge k.$ Hence, the sequence $d\l(z,x_i(\omega)\r)$ converges and consequently also $x_i(\omega)\to x(\omega).$

As the measurability of $x$ is obvious, the proof is complete.
\end{proof}

\section{Applications}

Here we apply our theorem to a few classical optimization problems (Examples \ref{exa:e1},\ref{exa:e2},\ref{exa:e3}) as well as a recent statistical model for phylogenetic inference (Example \ref{exa:bhv}). In all these examples, the resolvents of the marginal functions have a simple form and can thus be evaluated exactly. Since implicit methods are preferable to explicit ones for their better stability \cite{bertsekas}, one can conclude that Theorem \ref{thm:conv} provides us with a more powerful tool than the stochastic gradient method. In Example \ref{exa:bhv} the underlying space is a CAT(0) cubical complex without a differentiable structure, which means that minimization methods based on (sub)gradients are not applicable at all. The minimization problem in Example \ref{exa:e3} can be solved by the stochastic PPA thanks to Theorem \ref{thm:conv}, the convergence theorems by other authors mentioned in the Introduction do not apply for their too restrictive growth conditions.

In the following examples we identify a Hadamard space~$\hs,$ probability space~$S$ and random variable~$\xi$ in order to make a connection with the previous sections.
\begin{exa}[Medians] \label{exa:e1}
Let $b\col\Omega\to\rls^d$ be a random variable and set $\xi\as b$ along with $f(x,\xi)\as \l\|x-b\r\|.$ Hence $S=\rls^d.$ We are to minimize the function
\begin{equation*}
 F(x)\as\expe\l\|x-b\r\|,\qquad x\in\rls^d.
\end{equation*}
One can easily verify that the Assumptions of Theorem \ref{thm:conv} are satisfied and the resolvent of $f(\cdot,\xi)$ is easy to compute. A minimizer of $F$ is called a median.
\end{exa}
\begin{exa}[Least non-squares] \label{exa:e2}
Let $a\col\Omega\to\rls^d$ and $b\col\Omega\to\rls$ be random variables and set $\xi\as(a,b)$ along with $f(x,\xi)\as \l|\langle a,x\rangle-b\r|.$ Hence $S=\rls^{d+1}.$ Here the objective function is
\begin{equation*}
 F(x)\as\expe\l|\langle a,x\rangle-b\r|,\qquad x\in\rls^d.
\end{equation*}
The growth condition \eqref{i:sppa:lips} is satisfied provided $a\in L^2.$

Again, the resolvents can be expressed in a closed form. If $a=0,$ then $J_\lam^\xi x=x$ for every $x\in\rls^d.$ Otherwise,
\begin{equation}
 J_\lam^\xi x = \l\{
\begin{array}{ll}x-\min\l\{\lam,\frac{\langle a,x\rangle -b}{\|a\|^2} \r\}a,& \text{if} \langle a,x\rangle \ge b, \\ \\  x+\min\l\{\lam,\frac{b-\langle a,x\rangle}{\|a\|^2} \r\}a, & \text{if} \langle a,x\rangle < b, \end{array} \r.
\end{equation}
for every $x\in\rls^d.$ See \cite[Lemma 3.1]{bergmann-etal} for an explicit calculation.
\end{exa}

\begin{exa}[Least squares] \label{exa:e3}
Let $a\col\Omega\to\rls^d$ and $b\col\Omega\to\rls$ be random variables and set $\xi\as(a,b)$ along with $f(x,\xi)\as \half\l(\langle a,x\rangle-b\r)^2.$ Hence $S=\rls^{d+1}.$ We obtain the function
\begin{equation} \label{eq:leastsquares}
 F(x)\as\half\expe\l(\langle a,x\rangle-b\r)^2,\qquad x\in\rls^d.
\end{equation}
It is straightforward to show that if $a^2,ba\in L^2,$ then the condition \eqref{i:sppa:lips} is satisfied and one can also check that
\begin{equation}
 J_\lam^\xi x = x - \lam\frac{\langle a,x\rangle - b}{1+\lam \l\|a\r\|^2}a
\end{equation}
for every $x\in\rls^d;$ see \cite[Lemma 3.4]{bergmann-etal}. 

In particular, this applies into the classical least squares: given a matrix $A\in\rls^{d\times d}$ and vector $b\in\rls^d,$ minimize $F\col x\mapsto\half\|Ax-b\|^2.$ If we denote the $k$-th row of $A$ by $a_k$ and the $k$-th entry of $b$ by $b_k,$ then the loss function is 
\begin{equation} 
F(x)\as \half\|Ax-b\|^2=\frac{1}{2d}\sum_{k=1}^d \l(\langle a_k,x\rangle-b_k\r)^2, 
\end{equation}
and hence $\xi=k\in\{1,\dots,d\}$ with $f_k=\half\l(\langle a_k,\cdot\rangle-b_k\r)^2.$

One can also use a regularization of the objective function $F,$ that is, to minimize a new function $F+\mu\|\cdot\|^2,$ where $\mu>0,$ and Theorem~\ref{thm:conv} still applies and the resolvents are equally easy to compute. Recall that regularizations are used in statistics and machine learning as a standard tool against overfitting the data under consideration, as well as in optimization to handle ill-posed problems.
\end{exa}

\begin{exa}[Posterior median and mean in tree space] \label{exa:bhv}
Let $\ts_n$ be the BHV tree space whose orthant dimension is $n-2.$ This space was constructed and proved to be a CAT(0) cubical complex (hence a locally compact Hadamard space) in~\cite{bhv}. Further details and the original phylogenetic motivation can be found either in the original paper~\cite{bhv} or in~\cite{mybook}.

A recent statistical model for phylogenetic inference \cite{benner} relies upon minimizing the function
\begin{equation*}
 F(x)\as\int_{\ts_n} d(x,t)^q\di\mu_D(t),\qquad x\in\ts_n,
\end{equation*}
where $\mu_D$ stands for a posterior distribution given data $D$ and $q\in\{1,2\}.$ In~\cite{benner} the above function $F$ was approximated by empirical averages and they were minimized. Thanks to Theorem \ref{thm:conv} however, one can now minimize the function $F$ directly, since it is possible generate samples from $\mu_D$ on-the-fly. Again, the resolvents of the marginal functions are easy to compute in a closed form \cite{mm,mybook,benner}. To put this example into the perspective of Theorem \ref{thm:conv}, note that $S=\ts_n$ and $\mu=\mu_D.$ For recent developments in statistics in the BHV tree space, the interested reader is referred also to 
\cite{barden-le-owen,miller-owen-provan,nye}.
\end{exa}


\bibliographystyle{siam}
\bibliography{sppa}

\end{document}